\documentclass{amsart}
\usepackage[latin1]{inputenc}
\usepackage{amssymb}


\vfuzz2pt 
\hfuzz2pt 
\newtheorem{thm}{Theorem}
\newtheorem{cor}[thm]{Corollary}
\newtheorem{lem}[thm]{Lemma}
\newtheorem{prop}[thm]{Proposition}
\theoremstyle{definition}
\newtheorem{defn}[thm]{Definition}
\theoremstyle{remark}

\numberwithin{equation}{section}

\newcommand{\To}{\longrightarrow}

\newcommand{\vertiii}[1]{{\left\vert\kern-0.25ex\left\vert\kern-0.25ex\left\vert #1 
		\right\vert\kern-0.25ex\right\vert\kern-0.25ex\right\vert}}

\begin{document}
\setcounter{tocdepth}{1}


\title[]{Spaces $C(K)$ with an equivalent URED norm}
\author{Antonio Avil\'{e}s}
\address{Departamento de Matem\'{a}ticas, Universidad de Murcia, Campus de Espinardo, 30100 Murcia, Spain.} \email{avileslo@um.es}

\author{Stanimir Troyanski}
\address{Institute of Mathematics and Informatics, Bulgarian Academy of Science, bl. 8, acad. G. Bonchev str. 1113 Sofia, Bulgaria and Departamento de Matem\'{a}ticas, Universidad de Murcia, Campus de Espinardo, 30100 Murcia, Spain.} \email{stroya@um.es}

\thanks{Authors supported by project MTM2017-86182-P (Government of Spain, AEI/ERDF-FEDER, EU). First author also supported by project 20797/PI/18 by Fundaci\'{o}n S\'{e}neca, ACyT Regi\'{o}n de Murcia, and second author also supported by the Bulgarian National Scientific Fund, Grant KP06H22/4, 04.12.2018.}

	\subjclass[2010]{46B03,46B20,46B26}

\begin{abstract}
We prove that a Banach space of continuous functions $C(K)$ has a renorming that is uniformly rotund in every direction (URED) if and only if the compact space $K$ supports a strictly positive measure.
\end{abstract}

\maketitle

\section{Introduction}

Renorming theory is a subfield of Banach space theory that deals with the construction of norms with good properties preserving the underlying topological structure of a given \emph{bad} norm. A typical such good property is rotundness. A norm is rotund (or strictly convex) if its spheres contain no nontrivial segments. In other words, if $x\neq y$ are vectors with $\|x\|=\|y\| = 1$ then $\|tx + (1-t)y\| < 1$ whenever $0<t<1$. If we look at the Banach space $C(K)$ of continuous functions on a compact space $K$, its norm $\|f\|_\infty = \max\{|f(x)| : x\in K\}$ is very far from being rotund. If two functions attain their norms at the same point with same sign and value, then the segment that joins them lies in a sphere. So the natural renorming problem here is: In spite of the lack of rotundness of the infinity norm, can we find an equivalent rotund norm on $C(K)$? This was first studied by Dashiell and Lindenstrauss \cite{DasLin} who proved that, although this is false for an arbitrary $K$, there are many large classes of compact spaces $K$ for which $C(K)$ has a rotund renorming. A remarkable later contribution is that of Haydon, who completely solved the problem when $K$ is a tree \cite{Haydon} and provided further negative examples in \cite{Haydon1}. One of the cases when $C(K)$ has an equivalent rotund norm is when $K$ admits a strictly positive measure. That is, when there is a measure $\mu$ defined on the Borel $\sigma$-algebra of $K$ such that $\mu(G)>0$ for all nonempty open $G\subset K$. The construction of a rotund norm $\vertiii{ \cdot}$ is not difficult in this case, we take the $\ell_2$-sum of the infinity norm and the norm in $L_2(\mu)$:
$$ \vertiii{f}  = \sqrt{ \|f\|^2_\infty + \int_K f^2 d\mu }. $$
A more minute analysis of this norm shows that is not only rotund, but it has a stronger property: it is \emph{uniformly rotund in every direction} (URED). This is one of the several standard strengthnings of rotundness in which a certain uniformity is required on the relation between the distance between points in the sphere and the distance of midpoints to the sphere. We recall the definition:

\begin{defn}\label{defURED}
	A norm on a space $X$ is said to be URED if whenever $(x_n)_{n}$ and $(y_n)_n$ are two sequence of vectors such that
	\begin{enumerate}
		\item $\|x_n\| = \| y_n\| = 1$ for all $n$,
		\item $\lim_n \left\|\frac{x_n+y_n}{2}\right\|=1$,
		\item There exists $z\in X$ and scalars $r_n$ such that $x_n-y_n = r_n z$ for all $n$,
	\end{enumerate}
then $\lim_n \|x_n -y_n\| = 0$.
\end{defn}

When condition (3) is removed, we would get uniform rotundity (UR), so this explains the name. We refer to the monograph \cite{DGZ} for further information. The main result of this paper is a converse to the above. This is basically the only way to get a URED renorming on a space of continuous functions:

\begin{thm}\label{maintheorem}
	For a compact space $K$ the following are equivalent:
	\begin{enumerate}
		\item $C(K)$ admits an equivalent URED norm.
		\item $K$ supports a strictly positive measure.
	\end{enumerate}
\end{thm}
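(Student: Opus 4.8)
The plan is to prove the two implications separately. The implication $(2)\Rightarrow(1)$ is the short one: I would verify directly that the norm $\vertiii{\cdot}$ already displayed in the introduction is URED, the only real input being that its second summand is uniformly convex. Concretely, suppose $(f_n),(g_n)$ satisfy the hypotheses of Definition \ref{defURED} for $\vertiii{\cdot}$, with $f_n-g_n=r_nz$ for a fixed $z\neq0$. Writing each vector as the pair $\bigl(\|\cdot\|_\infty,\|\cdot\|_{L_2(\mu)}\bigr)\in\Real^2$ and using that $\vertiii{\cdot}$ is the Euclidean combination of these two seminorms, the convergence $\vertiii{(f_n+g_n)/2}\to1$ forces, by rotundity of the Euclidean plane, the two defects $\tfrac{\|f_n\|_{L_2(\mu)}+\|g_n\|_{L_2(\mu)}}{2}-\bigl\|\tfrac{f_n+g_n}{2}\bigr\|_{L_2(\mu)}$ and $\|f_n\|_{L_2(\mu)}-\|g_n\|_{L_2(\mu)}$ to tend to $0$. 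Uniform convexity of $L_2(\mu)$ then gives $\|f_n-g_n\|_{L_2(\mu)}\to0$. Since $z$ is a nonzero continuous function and $\mu$ is strictly positive, $\|z\|_{L_2(\mu)}>0$, so $|r_n|=\|f_n-g_n\|_{L_2(\mu)}/\|z\|_{L_2(\mu)}\to0$ and hence $\vertiii{f_n-g_n}=|r_n|\,\vertiii{z}\to0$, as required.

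For the substantial implication $(1)\Rightarrow(2)$ I would aim to produce the strictly positive measure through Kelley's intersection-number machinery. Fix an equivalent URED norm $\norm{\cdot}$, normalized so that $\norm{\cdot}_\infty\le\norm{\cdot}\le c\norm{\cdot}_\infty$. For each nonempty open $U$ and each $t\in U$, Urysohn normality supplies a \emph{peak} $f\in C(K)$ with $0\le f\le1$, $f(t)=1$ and $\{f>0\}\subseteq U$. Recall that $K$ carries a strictly positive Radon measure as soon as a $\pi$-base of open sets can be split into countably many families $\mathcal F_n$, each of \emph{positive intersection number}, i.e. admitting $\eps_n>0$ with $\norm{\sum_{i=1}^m f_i}_\infty\ge\eps_n m$ for every finite sequence of peaks $f_i$ drawn from $\mathcal F_n$: Kelley's lemma then furnishes, for each $n$, a probability measure $\mu_n$ with $\mu_n(U)\ge\eps_n$ for all $U\in\mathcal F_n$ (realized as a weak$^*$ cluster point in $M(K)=C(K)^*$ of normalized sums of point masses at witnessing overlap points, hence automatically Radon), and $\mu=\sum_n 2^{-n}\mu_n$ is a strictly positive Radon measure. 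Thus the whole task reduces to manufacturing such a countable decomposition out of the URED modulus.

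The device I would use to feed URED into this scheme is to quantize, over rationals, the data that the norm $\norm{\cdot}$ attaches to a peak, so that all peaks fall into countably many classes, and to arrange each class to be governed by a \emph{single fixed direction} $z$ together with a single modulus threshold. This is essential because condition (3) of Definition \ref{defURED} only grants uniformity along one common direction: the associated modulus $\delta(\eps,z)=\inf\{1-\norm{\tfrac{x+y}{2}}:\norm{x}=\norm{y}=1,\ x-y\in\Real z,\ \norm{x-y}\ge\eps\}$ is positive for that fixed $z$, but is useless across varying directions. Within a class pinned to a direction $z$ (for instance a background peak), one compares configurations $x=w+z$, $y=w-z$ obtained by a sign flip on the $z$-coordinate, whose $\norm{\cdot}_\infty$-midpoint is $w$; the peaks of the family are carried by the common part $w$, and a failure of positive intersection number — many peaks that are spread out, so that $\norm{\sum f_i}_\infty=o(m)$ — should be convertible, through an averaging over many such sign flips in the fixed direction $z$, into a sequence of unit vectors with midpoint norms tending to $1$ while the fixed-direction displacement stays bounded away from $0$, contradicting $\delta(\eps,z)>0$.

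The main obstacle is exactly this bridging lemma: URED delivers uniformity \emph{per direction}, whereas the intersection-number bound is a uniformity \emph{across an entire family of functions}, and reconciling the two requires that the quantization genuinely confines each class to one effective direction while the sign-flip/averaging construction produces bona fide $\norm{\cdot}$-unit vectors (not merely $\norm{\cdot}_\infty$-unit vectors) with the right midpoint behavior. Controlling the renorming geometry through this normalization, and checking that countably many rational thresholds suffice to make $\bigcup_n\mathcal F_n$ a $\pi$-base, is where the real work lies; once the key lemma is in place, the passage from intersection numbers to a Radon strictly positive measure via weak$^*$ compactness of the ball of $M(K)$ is routine.
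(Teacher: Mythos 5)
Your direction $(2)\Rightarrow(1)$ is essentially correct (it is the easy, known direction; the paper verifies the stronger p-UR property of $\vertiii{\cdot}$ by the same parallelogram-type computation, and your degenerate case $\|f_n\|_{L_2(\mu)}\to 0$ is handled trivially by the triangle inequality). The problem is $(1)\Rightarrow(2)$: what you call the ``bridging lemma'' is not a detail to be checked later --- it is the entire mathematical content of the theorem, and your proposal leaves it open while acknowledging that ``the real work lies'' there. Moreover, the specific scheme you sketch has two concrete defects. First, the quantization step cannot do what you ask of it: there are uncountably many peaks $f_U$, hence uncountably many essentially different directions, and no rational discretization of ``the data the norm attaches to a peak'' will confine each of countably many classes to a \emph{single} effective direction $z$; URED gives nothing uniform across the distinct directions within a class, which is exactly the difficulty you set out to circumvent. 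Second, the sign-flip averaging around a common part $w$ does not produce the required near-unit midpoints: adding $\pm f_{G}$ to a function can increase the sup norm by $1$ at every step regardless of how spread out the peaks are, so the mere failure of positive intersection number does not make $\left\|\frac{x+y}{2}\right\|$ large relative to $\|x\|,\|y\|$.

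The paper resolves both points with machinery absent from your plan. In place of pinning classes to directions, it proves a new martingale characterization of URED (Theorem~\ref{UREDa}), a variant of the known one in which the martingales need not be Walsh--Paley but the large increments occur on Walsh--Paley pairs; the directional uniformity of URED enters only through the $\pm$ symmetry of increments on each pair, so the homogeneous sets $X_{m,t}$ of the decomposition never need to be tied to one direction. In place of the naive sign flip, the key Lemma~\ref{keylemma} exploits the multiplicative structure of $C(K)$: before flipping $\pm f_{G_k}$ it inserts the increment $g_1\left(\frac{h_k}{m-1}-f_{G_k}\right)$, where $g_1$ is the truncation of the current martingale value and $h_k=\sum_{j\neq k}f_{G_j}$, so that the new value $g_0+g_1(1-f_{G_k})\pm f_{G_k}+\frac{g_1h_k}{m-1}$ has sup norm at most $\max(\|g\|_\infty,1)+\frac{l(\mathcal{G})}{m-1}$; the error term is governed precisely by the weak intersection number. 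Iterating yields martingales with arbitrarily many unit-size increments in $H$ but norm bounded near $1$, i.e.\ $a(H)\leq 1$ whenever $wi\tilde{n}(\mathcal{G})=0$ and $\mathcal{F}_\mathcal{G}\subset H$ (Corollary~\ref{betawiszero}), which contradicts $a(X_{m,t})\geq t>1$ via the Galvin--Prikry form of Kelley's criterion (Theorem~\ref{Kelleyscharbeta}). Without an analogue of this sup-norm-control construction, your approach cannot convert spreading of peaks into a URED violation, so the proposal as it stands does not constitute a proof.
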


This can be viewed as an improvement of a result by Rycht\'{a}\v{r} \cite{Rychtar}. He defines a norm on a space $X$ to be pointwise uniformly rotund (p-UR) if there is a weak$^\ast$ dense subspace $F$ of $X^\ast$ such that, whenever conditions (1) and (2) of Definition~\ref{defURED} are satisfied, we conclude that $\lim_n f(x_n-y_n) = 0$ for all $f\in F$. For every $z\in X$, $z\neq 0$ we can take $f\in F$ with $f(z)\neq 0$, so we get that every $p$-UR norm is URED. Rycht\'{a}\v{r}'s theorem \cite{Rychtar} states that $C(K)$ admits an equivalent $p$-UR norm if and only if $K$ supports a strictly positive measure. The norm $\vertiii{\cdot}$ defined above is in fact p-UR. Even if Theorem~\ref{maintheorem} improves Rycht\'{a}\v{r}'s result, the techniques used in the proofs are completely different. It follows that $C(K)$ has a URED renorming if and only if it has a $p$-UR renorming. This is not true for general Banach spaces. The space $L_1(\mu)$ has an equivalent URED norm, cf. \cite{KT,Kutzarova,DKT}, and \cite[Remark 8]{LPT} for a quantive version of this fact. On the other hand, if $L_1(\mu)$ is nonseparable and $\mu$ is nonatomic, then it has no equivalent $p$-UR norm \cite{DKT,Rychtar}. 

Another remark is that Theorem~\ref{maintheorem} is a well known fact when $K$ is scattered. Or more generally, when $K$ has a dense set of isolated points. In that case, having a strictly positive measure is equivalent to the fact that this set of isolated points is countable. If it is uncountable, then the functions supported on the isolated points generate a copy of $c_0(\Gamma)$ with uncountable $\Gamma$, which fails to have a URED renorming \cite{DJS}, cf. also \cite[Proposition IV.6.4]{DGZ}. Remember that a scattered space can have a countable set of isolated points but uncountable height, or can even be \emph{thin and tall} \cite{JW}.

 The implication $(2)\Rightarrow (1)$ is easy and known. For the sake of completeness, we can quickly provide a proof. After the comments above, it would remain to check that the norm $\vertiii{\cdot}$ is $p$-UR. Suppose that $\mu$ is a strictly positive measure, and take the subspace $F\subset C(K)^\ast$ of all functionals of the form $\hat{f}(g) = \int_K f g d\mu$ with $f\in L_2(\mu)$. Since $\mu$ is strictly positive, $F$ separates points, so it is weak$^\ast$ dense. If $(x_n)$ and $(y_n)$ are as in (1) and (2) in Definition~\ref{defURED}, using first Cauchy-Schwarz inequality, second the fact that the square of the norm is a convex function, and then the definition of $\vertiii{\cdot}$,

\begin{eqnarray*}
	\hat{f}(x_n-y_n)^2 = \left(\int_K f (x_n-y_n) d\mu\right)^2 \leq \left(\int_K f^2 d\mu\right) \cdot \left(\int_K (x_n-y_n)^2 d\mu\right)\\
	\leq \left(\int_K f^2 d\mu\right) \cdot \left(2(\|x_n\|_\infty^2 + \|y_n\|_\infty^2) - \|x_n+y_n\|_\infty^2 + \int (x_n-y_n)^2d\mu \right) \\
	= \left(\int_K f^2 d\mu\right) \cdot\left(2(\vertiii{x_n}^2 + \vertiii{y_n}^2) - \vertiii{x_n+y_n}^2\right) \To 0.
\end{eqnarray*}

The rest of the paper is entirely devoted to the proof of the implication  $(1)\Rightarrow (2)$ of Theorem~\ref{maintheorem}, passing through a sequence of auxiliary results. This will require some variations on the martingale characterization of URED renorming by the second author \cite{T1,T2}, using martingales that are not necessarily Walsh-Paley and taking advantage of the algebraic structre of $C(K)$. From this, we will not get an explicit strictly positive measure. Instead, we will make use of a variation, due to Galvin and Prikry \cite{GalPri}, of Kelley's characterisation of spaces with strictly positive measure through a countable decomposition into families of open sets with positive intersection numbers.

\section{Martingales in URED spaces}

 As mentioned in the introduction, an important part of the argument is a variation on the second auhor's martingale characterization of URED renorming. The is exposed in the book of Deville, Godefroy and Zizler \cite{DGZ}, that we will follow closely. Let $(\Omega,\Sigma,p)$ be a probability space. We start by recalling the notion of discrete martingale in a normed space. A general auxiliary reference for martingales with values in Banach spaces may be Stromberg's book \cite{Stromberg}. A partition of $\Omega$ is a finite pairwise disjoint family of elements of $\Sigma$ whose union is $\Omega$.  Let $(\mathcal{E}_n)_{n\geq 0}$ be a sequence of partitions of $\Omega$ consecutively finer (every element of $\mathcal{E}_{n+1}$ is a subset of an element of $\mathcal{E}_n$). We denote by $\mathcal{A}_n$ the algebra generated by the partition $\mathcal{E}_n$, i.e. $\mathcal{A}_n$ consists of all unions of elements of $\mathcal{E}_n$. Let $X$ be a linear space, $L_n:\Omega\to X$ be an $\mathcal{A}_n$-simple random variable, i.e. a random variable that is constant on every set from the partition $\mathcal{E}_n$, such that 

$$ \forall E\in \mathcal{A}_{n-1} \ \ \int_E L_n dp = 0.$$ 

Finally, let $M_n = \sum_{j=0}^n L_j$. Such a (finite or infinite) sequence of random variables $M_1, M_2,\ldots$ is called a discrete martingale, and for us just a martingale. In the language of conditional expectations, what we are saying is that $\mathbb{E}(M_n|\mathcal{A}_{n-1}) = M_{n-1}$. The increments of the martingales are denoted as usual $dM_n = L_n = M_n - M_{n-1}$ for $n\geq 1$, $dM_0=L_0$.

\begin{lem}\label{normsquareincreases}
If $X$ is a normed space, then, for every set $E\in \mathcal{A}_{n-1}$ we have $$\int_E \|M_n\|^2 \geq \int_E \|M_{n-1}\|^2.$$
\end{lem}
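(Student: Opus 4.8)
The plan is to exploit the convexity of the function $f(x) = \|x\|^2$ on $X$ together with the defining martingale property $\int_A L_n\,dp = 0$ for $A \in \mathcal{A}_{n-1}$. The starting point is a subgradient (gradient) inequality for $f$: for any $x \in X$, choose by Hahn--Banach a norming functional $\phi_x \in X^\ast$ with $\|\phi_x\| = 1$ and $\phi_x(x) = \|x\|$. Then for every $y \in X$ one has
$$ \|y\|^2 \geq \|x\|^2 + 2\|x\|\,\phi_x(y-x), $$
which I would verify directly: the right-hand side equals $2\|x\|\phi_x(y) - \|x\|^2 \leq 2\|x\|\,\|y\| - \|x\|^2$, and the claimed inequality then reduces to $(\|y\|-\|x\|)^2 \geq 0$. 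When $x=0$ the inequality is trivial and the functional may be chosen arbitrarily of norm one.

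Next I would apply this pointwise with $x = M_{n-1}(\omega)$ and $y = M_n(\omega)$, noting $y - x = L_n(\omega)$. Writing $\phi_\omega := \phi_{M_{n-1}(\omega)}$, this gives for every $\omega$,
$$ \|M_n(\omega)\|^2 \geq \|M_{n-1}(\omega)\|^2 + 2\|M_{n-1}(\omega)\|\,\phi_\omega\big(L_n(\omega)\big). $$
Integrating over $E \in \mathcal{A}_{n-1}$, it suffices to show that the cross term $\int_E 2\|M_{n-1}\|\,\phi_\omega(L_n)\,dp$ vanishes (or at least is nonnegative). Here the crucial observation is that $M_{n-1} = \sum_{j=0}^{n-1} L_j$ is $\mathcal{A}_{n-1}$-simple, hence constant on each element $A$ of the finite partition $\mathcal{E}_{n-1}$; on such an $A$ the value $M_{n-1} \equiv m_A$ is fixed, so both $\|M_{n-1}\|$ and the norming functional $\phi_\omega \equiv \phi_{m_A}$ can be taken constant. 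This makes $\omega \mapsto \phi_\omega$ an $\mathcal{A}_{n-1}$-simple (in particular measurable) selection, and sidesteps any issue of choosing norming functionals coherently.

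With this, I would write $E$ as a finite disjoint union of elements $A \in \mathcal{E}_{n-1}$ and compute on each piece, using linearity of the fixed functional $\phi_{m_A}$ and the martingale relation $\int_A L_n\,dp = 0$ (valid since $A \in \mathcal{A}_{n-1}$):
$$ \int_A \|M_{n-1}\|\,\phi_\omega(L_n)\,dp = \|m_A\|\,\phi_{m_A}\!\left(\int_A L_n\,dp\right) = \|m_A\|\,\phi_{m_A}(0) = 0. $$
Summing over the finitely many $A \subseteq E$ gives that the whole cross term is zero, and hence $\int_E \|M_n\|^2 \geq \int_E \|M_{n-1}\|^2$.

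I do not expect a genuine obstacle here, since the result is a standard submartingale-type estimate; the only point requiring care is the selection of the norming functionals $\phi_\omega$ and the verification that it is simple. This is precisely where the finiteness of the partition $\mathcal{E}_{n-1}$ and the $\mathcal{A}_{n-1}$-simplicity of $M_{n-1}$ are essential: they reduce the selection to finitely many applications of Hahn--Banach and let the linear functional pass through the integral so that the martingale condition can be invoked verbatim.
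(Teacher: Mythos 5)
Your proof is correct, but it takes a genuinely different route from the paper's. The paper argues combinatorially on a single atom: it reduces to $E\in\mathcal{E}_{n-1}$, writes $E=\bigcup_{i=1}^k E_i$ with $E_i\in\mathcal{E}_n$, notes that $M_{n-1}\equiv x$ on $E$ and $L_n\equiv y_i$ on $E_i$ with $\sum_i y_i\,p(E_i)=\int_E L_n\,dp=0$, and then reads the desired inequality $\sum_i \|x+y_i\|^2 p(E_i)\geq \|x\|^2 p(E)$ as an instance of Jensen's inequality for the convex function $\|\cdot\|^2$ --- no duality, no functionals, just convexity applied to one finite convex combination. You instead run the standard conditional-Jensen argument: a pointwise supporting-hyperplane inequality $\|y\|^2\geq\|x\|^2+2\|x\|\phi_x(y-x)$ obtained from a Hahn--Banach norming functional, followed by integration, with the cross term killed by the martingale relation $\int_A L_n\,dp=0$ on atoms $A\in\mathcal{E}_{n-1}$; your observation that $\phi_\omega$ can be chosen constant on each such atom correctly disposes of the measurability of the selection (everything in sight is simple, so all integrals are finite sums). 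What each approach buys: yours invokes Hahn--Banach but never needs to look at the finer partition $\mathcal{E}_n$, and it generalizes verbatim to any continuous convex function in place of $\|\cdot\|^2$ and to genuine conditional expectations beyond the simple setting; the paper's is more elementary and self-contained, exploiting the simplicity of the random variables to collapse the whole statement to a single finite Jensen inequality. One cosmetic remark: your phrase ``(or at least is nonnegative)'' is unnecessary hedging, since your atom-by-atom computation shows the cross term is exactly zero.
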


\begin{proof}
	This is an elementary fact, but we state it and prove it as a lemma for better reference. We can suppose that $E\in\mathcal{E}_{n-1}$. So we can write $E=\bigcup_1^k E_i$ as a partition with $E_i\in \mathcal{E}_n$. We know that $M_n$ is constant equal to a vector $x\in X$ on $E$, while $L_n$ is constant to a vector $y_i\in X$ one each $E_i$. The desired inequality is 
	$$\sum_{i=1}^k \|x+y_i\|^2\cdot p(E_i) \geq \|x\|^2 \cdot p(E). $$
	Notice that $\sum_{i=1}^k y_i p(E_i) = \int_E L_n = 0$. So we just use the fact that the square of the norm is a convex function.
\end{proof}

A $k$-Walsh-Paley pair is a pair of sets $E^+,E^-\in \mathcal{E}_k$ with the same probability and such that $E^+ \cup E^- \in \mathcal{E}_{k-1}$. The set $\Omega_k$ is the union of all $k$-Walsh-Paley pairs. A Walsh-Paley martingale is one where $\Omega=\Omega_k$ and $|\mathcal{E}_k| = 2^k$ for all $k$.

Let $H$ be a homogeneous subset of a normed space $X$. That is, $\lambda x \in H$ whenever $x\in H$ and $\lambda\in\mathbb{R}$. For $k\in\mathbb{N}$, one defines an index
$$\alpha_k(H) = \inf \left\{ \sup_n \left(\mathbb{E}\|M_n\|^2\right)^{1/2} : (M_n) \in \Xi_k\right\},$$
where $\Xi_k$ is the set of all Walsh-Paley martingales for which there exist at least $k$ many different integers $n$ and measurable sets $D_n\in \Sigma$ such that $dM_{n}(D_n)\subset H$ and 
$\int_{D_n}\|dM_n\|^2 dp \geq 1$.

Geometrically speaking, $\alpha_k(H)$ measures how fast a dyadic tree must grow when it has many large branches in $H$. Clearly $(\alpha_k(H))_{k\geq 0}$ is a nondecreasing sequence. Define also
$$\alpha(H) = \sup_k \alpha_k(H).$$

This is the aforementioned characterization:

\begin{thm}[\cite{DGZ} Theorem IV.6.1] \label{Troyanskischaracterization} 
	A normed space $X$ admits an equivalent URED norm if and only if for every $t>0$ there exists a sequence of homogeneous sets $(X_{m,t})_{m\geq 1}$ such that $X=\bigcup_m X_{m,t}$ and $\inf_m \alpha(X_{m,t}) \geq t$.
\end{thm}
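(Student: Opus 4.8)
The plan is to establish the two implications separately, both of them resting on a quantitative dictionary between the martingale index $\alpha(H)$ and the uniform rotundity of $\norm{\cdot}$ along the directions contained in the homogeneous set $H$. The point that makes this dictionary work is that in a Walsh-Paley martingale every increment is one-dimensional at each node: if $E^+,E^-$ is a $k$-Walsh-Paley pair lying inside a node $A\in\mathcal{E}_{k-1}$ on which $M_{k-1}$ is constant equal to $x$, then the zero mean and equal probabilities force the increment to take opposite values $\pm y$, so that $M_k$ equals $x+y$ on $E^+$ and $x-y$ on $E^-$. Each pair thus realises exactly a segment $[x-y,x+y]$ with midpoint $x$ in the single direction $y$, which is the configuration controlled by Definition~\ref{defURED}, and the whole argument couples this with the convexity bound of Lemma~\ref{normsquareincreases}.

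For the implication from an equivalent URED norm to the index condition, I would fix such a norm $\norm{\cdot}$ and introduce the directional modulus
\[
\delta(z,\eps)=\inf\set{\,1-\norm{\tfrac{x+y}{2}} : \norm{x}=\norm{y}=1,\ x-y\in\Real z,\ \norm{x-y}\geq\eps\,},
\]
which is strictly positive for every $z\neq 0$ and $\eps>0$. For a fixed scale $t$ I would partition the directions of $X$ into countably many homogeneous sets $X_{m,t}$, grouped so that a single modulus is available on each, giving $X=\bigcup_m X_{m,t}$. On such a set I would prove $\alpha(X_{m,t})\geq t$ by an energy-growth argument. Given a Walsh-Paley martingale in $\Xi_k$ and supposing $\sup_n\mathbb{E}\norm{M_n}^2\leq t^2$, at each of the $k$ designated increments the pair structure together with the common modulus upgrades Lemma~\ref{normsquareincreases} to a strict gain
\[
\int_A\norm{M_n}^2\,dp\ \geq\ \int_A\norm{M_{n-1}}^2\,dp+\eta\,p(A)
\]
at every node $A$ on which the increment is not small relative to $M_{n-1}$. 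Summing these gains over the designated increments and their nodes drives $\mathbb{E}\norm{M_n}^2$ past $t^2$ once $k$ is large, so that $\alpha_k(X_{m,t})\geq t$ for all large $k$ and hence $\alpha(X_{m,t})\geq t$.

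For the converse I would use the index at every scale to manufacture the new norm. For each pair $(m,t)$ the inequality $\alpha(X_{m,t})\geq t$ yields, via the martingale structure, a convex function $\varphi_{m,t}$ adapted to $\norm{\cdot}$ whose rotundity along the directions of $X_{m,t}$ is quantified by $t$, in the sense that it detects segments down to a scale $\eps(t)\to 0$. I would then put
\[
\vertiii{x}^2=\norm{x}^2+\sum_{m,t}\lambda_{m,t}\,\varphi_{m,t}(x)^2
\]
with summable weights $\lambda_{m,t}>0$ making $\vertiii{\cdot}$ an equivalent norm. To see it is URED I would take $(x_n),(y_n)$ as in Definition~\ref{defURED} with $x_n-y_n=r_n z$ for a fixed $z$: for each $t$ the direction $z$ lies in some $X_{m,t}$, the corresponding summand reacts to the segment with a defect in $\vertiii{\cdot}^2$ bounded below in terms of $r_n$ and the scale $t$, and since the total defect tends to $0$, letting $t\to\infty$ forces $r_n\to 0$, that is $\norm{x_n-y_n}\to 0$.

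The step I expect to be the main obstacle is the size mismatch in the growth argument: the rotundity gain $\eta$ degrades as $\norm{M_{n-1}}$ grows large compared with the increment $\norm{dM_n}$, so a direct summation of gains is not available. I would handle this by a Chebyshev/truncation argument: under the standing bound $\mathbb{E}\norm{M_n}^2\leq t^2$ only a set of small probability can carry a large value of $\norm{M_{n-1}}$, so on the bulk of each designated increment the increment is comparable to the current position and the common modulus applies, while the exceptional part already contributes heavily to the energy. Organising the countable decomposition so that one modulus genuinely serves all of $X_{m,t}$, and, on the converse side, upgrading rotundity of $\vertiii{\cdot}$ to uniform rotundity in each direction, are secondary difficulties; both are absorbed by letting the scale $t$ range over a sequence tending to infinity, which is exactly the content of the quantifier ``for every $t>0$'' in the statement.
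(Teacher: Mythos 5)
Your proposal attempts both implications, so two separate assessments are needed. Note first that the paper itself does not prove this theorem at all: it is quoted verbatim from \cite{DGZ}, Theorem IV.6.1, and the only direction the paper ever re-proves (in the strengthened form needed for the index $a$, inside the proof of Theorem~\ref{UREDa}) is the ``only if'' direction, URED $\Rightarrow$ decomposition. Your treatment of that direction is, in outline, exactly the standard argument and exactly what the paper does: you group vectors by a quantitative convexity estimate, upgrade Lemma~\ref{normsquareincreases} to a strict energy gain at each designated increment, recognize that the gain degrades when $\|M_{n-1}\|$ is large relative to $\|dM_n\|$, repair this by a Chebyshev-type argument using the standing energy bound, and count gains against $t^2$. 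This is precisely the mechanism of the sets $U_i(t)$, Lemma~\ref{lemmafromDGZ}, Lemma~\ref{furtherlemma} and the counting step $(\star)$ in the paper. Two repairable imprecisions: the sets must be defined by the homogeneous quadratic inequality (as $U_i(t)$ is), not directly by your unit-sphere modulus $\delta(z,\eps)$ --- the bridge between the two is Lemma~\ref{UREDsquares}, i.e.\ \cite{DGZ} Proposition II.6.2, and it is not automatic --- and your additive gain $\eta\,p(A)$ must be scaled quadratically with the size of the increment, which is what the normalization $\int_{D_n}\|dM_n\|^2\,dp\geq 1$ is for.

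The genuine gap is the converse implication, decomposition $\Rightarrow$ URED renorming. Your entire argument there rests on the sentence that $\alpha(X_{m,t})\geq t$ ``yields, via the martingale structure, a convex function $\varphi_{m,t}$ \ldots whose rotundity along the directions of $X_{m,t}$ is quantified by $t$.'' That is not a proof step; it is a restatement of the conclusion, and it is where all the work of this implication lies in \cite{DGZ}. The missing construction is, roughly: define $\varphi_{m,t}(x)$ as an infimum of martingale energies $\sup_n\mathbb{E}\|M_n\|^2$ over Walsh--Paley martingales starting at $M_0\equiv x$ with a prescribed number of increments of unit $L_2$-size taking values in $X_{m,t}$; obtain approximate midpoint convexity by splicing the martingales for $x$ and for $y$ below a first increment $\pm\frac{1}{2}(x-y)$; observe that when $x-y$ is parallel to a direction in $X_{m,t}$, homogeneity makes this extra increment count toward the index, which is how the hypothesis $\alpha(X_{m,t})\geq t$ is converted into a uniform directional gain for $\varphi_{m,t}$; then prove equivalence with $\|\cdot\|$ and convexify/symmetrize so that the weighted $\ell_2$-sum is actually a norm. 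None of these steps appears in your proposal and none follows from what you wrote; once the $\varphi_{m,t}$ with the stated properties are granted, the weighted-sum argument you give is the easy part. So, as it stands, the ``if'' direction is unproven --- which is, ironically, precisely the direction that the paper also leaves entirely to the citation.
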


We will need a variation of this result with similar invariants $a_k(H)$ and $a(H)$ instead of $\alpha_k(H)$ and $\alpha(H)$, where martingales will not necessarily be Walsh-Paley, though we will restrict to Walsh-Paley pairs.

$$a_k(H) = \inf \left\{ \sup_n \left(\mathbb{E}\|M_n\|^2\right)^{1/2} : (M_n) \in \tilde{\Xi}_k\right\},$$
where $\tilde{\Xi}_k$ is the set of all martingales for which there exist at least $k$ many different integers $n$ and measurable sets $D_n\in \Sigma$ such that $dM_{n}(D_n)\subset H$ and 
$\int_{D_n\cap \Omega_n}\|dM_n\|^2 dp \geq 1$. Again, this is a nondecreasing sequence of indices and we define
$$a(H) = \sup_k a_k(H).$$
Clearly, $a_k(H) \leq \alpha_k(H)$ and $a(H)\leq \alpha(H)$. It is also clear that if $D\subset H$ then $\beta(H)\leq\beta(D)$ for $\beta=\alpha_k,\alpha,a_k,a$.

\begin{thm}\label{UREDa}
	A normed space $X$ admits an equivalent URED norm if and only if for every $t>0$ there exists a sequence of homogeneous sets $(X_{m,t})_{m\geq 1}$ such that $X=\bigcup_m X_{m,t}$ and $\inf_m a(X_{m,t}) \geq t$.
\end{thm}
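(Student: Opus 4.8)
The plan is to play this new invariant off against Theorem~\ref{Troyanskischaracterization}, getting one implication for free from the comparison $a\le\alpha$ and obtaining the other by revisiting the construction behind Theorem~\ref{Troyanskischaracterization}. The easy implication is that the displayed condition is sufficient: if for each $t>0$ there are homogeneous sets $(X_{m,t})_m$ with $X=\bigcup_m X_{m,t}$ and $\inf_m a(X_{m,t})\ge t$, then, because $a(H)\le\alpha(H)$ for every homogeneous $H$, the very same sets satisfy $\inf_m\alpha(X_{m,t})\ge\inf_m a(X_{m,t})\ge t$, and Theorem~\ref{Troyanskischaracterization} then supplies an equivalent URED norm. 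Note that this is where the fact that the $a$-condition is formally \emph{stronger} than the $\alpha$-condition is being used.

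For the converse, assume $X$ carries an equivalent URED norm. One cannot simply invoke Theorem~\ref{Troyanskischaracterization}, since it only provides homogeneous sets with $\alpha(X_{m,t})\ge t$ and the inequality $a\le\alpha$ points the wrong way. Instead I would keep exactly the sets $X_{m,t}$ manufactured in the forward implication of Theorem~\ref{Troyanskischaracterization} out of the modulus of the URED norm, and verify directly that $a(X_{m,t})\ge t$. The lower bound on $\alpha$ in that proof rests on a local estimate attached to each Walsh-Paley pair: on a pair $E^+\cup E^-\in\mathcal{E}_{n-1}$ the martingale property together with $p(E^+)=p(E^-)$ forces $M_n$ to take symmetric values $x\pm y$ about the constant value $x$ of $M_{n-1}$, and when $y\in X_{m,t}$ with $\int_{E^+\cup E^-}\|dM_n\|^2$ large the convexity supplied by the URED modulus makes $\int_{E^+\cup E^-}\|M_n\|^2-\int_{E^+\cup E^-}\|M_{n-1}\|^2$ exceed a definite amount. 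A martingale in $\tilde{\Xi}_k$ is required to carry its $k$ large $X_{m,t}$-increments on the sets $D_n\cap\Omega_n$, which by definition lie inside Walsh-Paley pairs, so precisely the same symmetric local estimate applies there; on the complement Lemma~\ref{normsquareincreases} still guarantees that $\int\|M_n\|^2$ does not decrease. Summing the local gains over these Walsh-Paley pairs and discarding the nonnegative contribution of the remaining increments reproduces the lower bound $\sup_n(\mathbb{E}\|M_n\|^2)^{1/2}\ge t$ obtained in the proof of Theorem~\ref{Troyanskischaracterization}, that is, $a(X_{m,t})\ge t$.

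I expect the main obstacle to be exactly the passage from Walsh-Paley martingales to arbitrary ones. In the definition of $\alpha$ the whole of $\Omega$ is tiled by Walsh-Paley pairs at every level, so every increment is a symmetric split to which the URED estimate applies directly; for $a$ the martingale may be arbitrary and only its restriction to the sets $\Omega_n$ is under control. The delicate point is therefore to check that the convexity gains harvested on the Walsh-Paley pairs still accumulate globally and are not cancelled by the uncontrolled increments sitting outside $\Omega_n$. This is what the monotonicity in Lemma~\ref{normsquareincreases} is for, and confirming that the local URED estimate carries over \emph{unchanged} to the symmetric branches embedded inside a general martingale is the technical heart of the argument.
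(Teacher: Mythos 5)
Your proposal is correct and takes essentially the same route as the paper: the easy direction via $a\le\alpha$ combined with Theorem~\ref{Troyanskischaracterization}, and the hard direction by re-running the DGZ construction (the sets $U_i(t)$ built from the URED norm) for general martingales, harvesting the symmetric Walsh-Paley estimate on $\Omega_n$ and using Lemma~\ref{normsquareincreases} on the complement. The one detail to make precise is that $D_n\cap\Omega_n$ itself need not be a union of \emph{complete} Walsh-Paley pairs; as in the paper one passes to the larger set $(dM_n)^{-1}(U_i(t))\cap\Omega_n$, which is such a union because $U_i(t)$ is symmetric.
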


\begin{proof}
We will follow the proof of \cite[Theorem IV.6.1]{DGZ}, making changes where necessary. Since $a\leq \alpha$, the implication that does not trivially follow from Theorem~\ref{Troyanskischaracterization} is that if $X$ has a URED norm, then we have sets $X_{m,t}$ as above. We will include several lemmas inside the proof of this theorem. The first one is a useful standard characterization of URED norms:

\begin{lem}\label{UREDsquares}
	A norm in a space $X$ is URED if and only if whenever we have two sequences of vectors $(u_n)$ and $(v_n)$ such that
	\begin{enumerate}
		\item $\lim_n 2\|u_n\|^2+2\|v_n\|^2 - \|u_n+v_n\|^2 = 0$,
		\item $(u_n)$ is bounded,
		\item there exists a vector $z$ and scalars $r_n$ with $u_n-v_n = r_n z$,
	\end{enumerate} 
then $\lim_n r_n = 0$.
\end{lem}

\begin{proof}
	This is \cite[Proposition II.6.2]{DGZ}.
\end{proof}
For $t>0$ and $i\in\mathbb{N}$, define $U_i(t)$ as the set of al $x\in X$ such that
$$\inf\left\{ \frac{\|x+y\|^2 + \|x-y\|^2}{2\|y\|^2} : y\in X, \  \|y\| \leq t \|x\|  \right\} \geq 1+i^{-1}$$

\begin{lem} For every $t$, we have $X = \bigcup_{i=1}^\infty U_i(t)$.
\end{lem}

\begin{proof}
	Notice that we always have
	$$\frac{\|x+y\|^2 + \|x-y\|^2}{2\|y\|^2} \geq 1.$$
	This can be deduced from Lemma~\ref{normsquareincreases}, applied to a martingale where $L_0$ constant equal to $y$, and $L_1$ is equal to $x$ and $-x$ in two sets of equal probability. So if the lemma was false, there would exist $x\in X$ and a sequence $y_1,y_2,\ldots\in X$ such that $\| y_i\| \leq t\|x\|$ for all $i$ and
	$$\lim_i \frac{\|x+y_i\|^2 + \|x-y_i\|^2}{2\|y_i\|^2} = 1,$$
	$$\text{so } \lim_i \frac{\|x+y_i\|^2 + \|x-y_i\|^2}{2\|y_i\|^2} - 1 = 0.$$
	 Since $\|y_i\|\leq t\|x\|$, we can multiply by $4\|y_i\|^2$ and get
	 $$ \lim_i 2\|x+y_i\|^2 + 2\|x-y_i\|^2 - \| 2y_i\|^2 = 0.$$
	 We can apply Lemma~\ref{UREDsquares} for $u_i=x+y_i$, $v_i=y_i-x$, $z=x$ and $r_i=2$ for all $i$. This is a contradiction.
\end{proof}

It will be enough to prove that $a(U_i(t))\geq t/2$. In fact, we will show that
$$ (\star) \ \ k\geq it^2 \Rightarrow a_k(U_i(t)) \geq t/2$$

\begin{lem}\label{lemmafromDGZ}
Let $f, g$ be simple $X$-valued random variables on $(\Omega,\Sigma,p)$. If $\mathbb{E}\|f\|^2 \leq 1$ and
$$\int_{g^{-1}(U_i(t))}\|g\|^2 \geq 2 t^{-2},$$
then
$$\mathbb{E}(\|f+g\|^2 +\|f-g\|^2) \geq 2 \mathbb{E}\|f\|^2 + t^{-2}i^{-1}. $$
\end{lem}

\begin{proof} This is exactly \cite[Lemma IV.6.2]{DGZ}.
\end{proof}

\begin{lem}\label{furtherlemma}
	Let $(M_n)$ be a martingale such that $\sup_n \mathbb{E}\|M_n\|^2 \leq 1$. Fix $n\in\mathbb{N}$ such that
	$$\int_{(dM_n)^{-1}(U_i(t))\cap \Omega_n} \|dM_n\|^2 dp \geq 2 t^{-2}.$$
	Then $\mathbb{E}\|M_n\|^2\geq \mathbb{E}\|M_{n-1}\|^2 + t^{-2}i^{-1}$.
\end{lem}

\begin{proof}
Set $E = (dM_n)^{-1}(U_i(t))\cap \Omega_n$. Since $E\subset \Omega_n$, we can write $E = \bigcup_{j=1}^s(E_j \cup E_j ^-)$, where $E_j^+ \cup E_j^-$ are $n$-Walsh-Paley pairs, so  $E\in\mathcal{A}_{n-1}$, $p(E_j^+) = p(E_j^-)$ and $dM_n(E_j^+) = - dM_n(E_j^-)$. Hence,
$$\int_E\|M_n\|^2 dp = \int_E\|M_{n-1}+dM_n\|^2 dp = \int_{E^+} (\|M_{n-1}+dM_n\|^2 + \|M_{n-1}-dM_n\|^2)dp,$$
where $E^+ = \bigcup_{j=1}^s E_j^+$. Since $dM_n(E^+) \subset dM_n(E) \subset U_i(t)$, we can apply Lemma~\ref{lemmafromDGZ} for $f=1_{E^+}\cdot M_{n-1}$ and $g=1_{E^+}\cdot dM_{n-1}$, and we obtain
\begin{eqnarray*}
\int_{E^+}(\|M_{n-1}+dM_n\|^2 + \|M_{n-1}-dM_n\|^2)dp & \geq & 2 \int_{E^+}\|M_{n-1}\|^2 dp + t^{-2}i^{-1}\\
& = &  \int_{E}\|M_{n-1}\|^2 dp + t^{-2}i^{-1}
\end{eqnarray*}
So we conclude that
$$ \int_E\|M_n\|^2 dp \geq \int_{E}\|M_{n-1}\|^2 dp + t^{-2}i^{-1}.$$
On the other hand, $\Omega\setminus E\in \mathcal{A}_{n-1}$, so by Lemma~\ref{normsquareincreases},
$$\int_{\Omega\setminus E} \|M_n\|^2 dp \geq \int_{\Omega\setminus E} \|M_{n-1}\|^2 dp$$
The last two inequalities together prove the lemma. 
\end{proof}

To finish the proof of Theorem~\ref{UREDa} it remains to prove the inequality $(\star)$. This just imitates \cite[Lemma 6.3]{DGZ}. The first observation is that, since the scalar multiple of a martingale is a martingale, taking $\tilde{M}_n = 2t^{-1}M_n$ the definition of $a_k(H)$ can be rewritten as
\begin{eqnarray*}
	a_k(H) = \inf\{ 2^{-1}t\sup_n \left(\mathbb{E}\|\tilde{M}_n\|^2\right)^{1/2} : (\tilde{M}_n) \text{ is a martingale}\\
	 \text{and } \left|\{n : \int_{d\tilde{M}_n^{-1}(H)\cap \Omega_n}\|d\tilde{M}_n\|^2 dp \geq 4t^{-2}\}\right|\geq k \},
\end{eqnarray*}
So, if $a_k(U_i(t))<t/2$ there should exist a martingale $(\tilde{M}_n)$ such that $\sup_n \mathbb{E}\|\tilde{M}_n\|^2 <1$ while
 $$\left|\{n : \int_{d\tilde{M}_n^{-1}(H)\cap \Omega_n}\|d\tilde{M}_n\|^2 dp \geq 4t^{-2}\}\right|\geq k.$$
 
 But if $(\tilde{M}_n)$ is such a martingale and 
 $$\int_{d\tilde{M}_n^{-1}(H)\cap \Omega_n}\|d\tilde{M}_n\|^2 dp \geq 4t^{-2},$$
 then by Lemma~\ref{furtherlemma}, we have $\mathbb{E}\|\tilde{M}_n\|^2\geq \mathbb{E}\|\tilde{M}_{n-1}\|^2 + t^{-2}i^{-1}$. Taking into account Lemma~\ref{normsquareincreases} and that $\sup_n \mathbb{E}\|\tilde{M}_n\|^2 <1$, it follows that there are less than $it^2$ many such numbers $n$. We were assuming that $k\geq it^2$ so we get a contradiction.
\end{proof}

\section{Weak intersection numbers}

\begin{defn}
	Given a finite family of sets $\mathcal{D}$, $l(\mathcal{D})$ will be the least cardinality $k\in \mathbb{N}$ such that for every $\mathcal{A}\subset \mathcal{D}$ with $|\mathcal{A}| > k$ we have $\bigcap \mathcal{A} = \emptyset$.
\end{defn}

\begin{defn}
	Given a family of sets $\mathcal{B}$, we define
	$$win(\mathcal{B}) = \inf \left\{ \frac{l(\mathcal{D})}{|\mathcal{D}|} : \mathcal{D} \subset \mathcal{B} \text{ is a nonempty finite subfamily} \right\}$$
\end{defn}

This index is called \emph{the weak intersection number of} $\mathcal{B}$ by Galvin and Prikry \cite{GalPri}. It is a variation of Kelley's intersection number, that can be alternatively used in the celebrated Kelley's characterization of compact spaces supporting a strictly positive measure , cf. \cite[Theorem 2]{GalPri} and \cite[Corollary 2.7]{TodorcevicCC}:

\begin{thm}\label{Kelleyschar}
	For a a compact Hausdorff space $K$ the following are equivalent:
	\begin{enumerate}
		\item $K$ supports a strictly positive measure.
		\item The family $\mathcal{G}$ of all nonempty open subsets of $K$ admits a countable decomposition $\mathcal{G} = \bigcup_{n=1}^\infty \mathcal{G}_n$ such that $win(\mathcal{G}_n)>0$ for all $n$.
	\end{enumerate} 
\end{thm}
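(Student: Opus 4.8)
The plan is to treat the two implications separately: $(1)\Rightarrow(2)$ is a short averaging estimate, while $(2)\Rightarrow(1)$ is the substantial half and follows the scheme by which Kelley builds a measure out of a combinatorial intersection condition. For $(1)\Rightarrow(2)$ I would normalize the strictly positive measure so that $\mu(K)=1$ and set $\mathcal{G}_n=\{G\in\mathcal{G}:\mu(G)\geq 1/n\}$. Strict positivity gives $\mu(G)>0$ for every nonempty open $G$, hence $\mathcal{G}=\bigcup_n\mathcal{G}_n$. To bound $win(\mathcal{G}_n)$ below, take a finite $\mathcal{D}\subset\mathcal{G}_n$ and note that, by the definition of $l$, the function $\sum_{D\in\mathcal{D}}1_D$ is pointwise at most $l(\mathcal{D})$; integrating against $\mu$ yields $\tfrac1n|\mathcal{D}|\leq\sum_{D\in\mathcal{D}}\mu(D)=\int_K\sum_{D\in\mathcal{D}}1_D\,d\mu\leq l(\mathcal{D})$, so $l(\mathcal{D})/|\mathcal{D}|\geq 1/n$ and therefore $win(\mathcal{G}_n)\geq 1/n>0$.

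For $(2)\Rightarrow(1)$ the pipeline I would set up has three layers. First, invoke Kelley's theorem in its measure form: a family $\mathcal{B}$ whose \emph{Kelley intersection number} $i(\mathcal{B})$ (the analogue of $win$ in which one allows arbitrary nonnegative multiplicities, equivalently the fractional relaxation of $win$) is positive admits a Radon probability measure $\nu$ with $\inf_{B\in\mathcal{B}}\nu(B)\geq i(\mathcal{B})$. This rests on the Riesz identification of measures with positive functionals on $C(K)$, a Hahn--Banach separation inside the weak$^\ast$ compact set $P(K)$ of Radon probabilities, and the minimax identity $i(\mathcal{B})=\max_{\nu\in P(K)}\inf_{B\in\mathcal{B}}\nu(B)$, the maximum being attained because $\nu\mapsto\inf_B\nu(B)$ is an infimum of the lower semicontinuous maps $\nu\mapsto\nu(B)$ and hence weak$^\ast$ upper semicontinuous. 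Second, if one has a decomposition $\mathcal{G}=\bigcup_{n,k}\mathcal{G}_{n,k}$ with $i(\mathcal{G}_{n,k})>0$ for all $n,k$, then picking $\nu_{n,k}$ as above and setting $\mu=\sum_{n,k}2^{-(n+k)}\nu_{n,k}$ gives a Radon measure with $\mu(G)>0$ for every nonempty open $G$, since each such $G$ lies in some $\mathcal{G}_{n,k}$; this is exactly strict positivity. Third, everything therefore reduces to upgrading the hypothesis $win(\mathcal{G}_n)>0$ to the existence of such a refinement.

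That upgrade is the heart of the matter and the step I expect to be hardest. One always has $i(\mathcal{B})\leq win(\mathcal{B})$, since evaluating the fractional optimum at the uniform weighting of a finite subfamily recovers the ratios $l(\mathcal{D})/|\mathcal{D}|$; but this inequality is in general strict (an integrality gap), so one cannot simply read off from $win(\mathcal{G}_n)>0$ a single measure charging $\mathcal{G}_n$, and a direct application of Kelley's theorem to $\mathcal{G}_n$ is unavailable. The required combinatorial statement, which is precisely the content isolated by Galvin and Prikry, is that every family of positive weak intersection number is a countable union of families of positive Kelley intersection number; granting this, one refines $\mathcal{G}=\bigcup_n\mathcal{G}_n$ into $\mathcal{G}=\bigcup_{n,k}\mathcal{G}_{n,k}$ and feeds it into the pipeline above. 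Proving this splitting, rather than the surrounding soft measure theory, is where the real work lies.
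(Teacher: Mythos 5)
Your proposal is sound and, in substance, matches how the paper itself treats this statement: the paper offers no proof of Theorem~\ref{Kelleyschar} at all, citing it to Galvin--Prikry and to Todorcevic, and your pipeline --- the averaging argument for $(1)\Rightarrow(2)$ (integrating $\sum_{D\in\mathcal{D}}1_D\leq l(\mathcal{D})$ against the normalized measure), Kelley's minimax theorem together with the weighted countable sum of the resulting measures, and the Galvin--Prikry splitting of a family with positive weak intersection number into countably many families with positive Kelley intersection number --- is exactly the route packaged inside those citations. The one step you correctly identify as the heart of the matter, the splitting theorem, remains a citation rather than a proof in your write-up, but that is precisely the status the paper gives to the entire theorem, so there is nothing to fault relative to the source.
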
  

For technical reasons, we will consider a slight variation:

\begin{defn}
	Given a family of sets $\mathcal{B}$, we define
	$$wi\tilde{n}(\mathcal{B}) = \inf \left\{ \frac{l(\mathcal{D})}{|\mathcal{D}|-1} : \mathcal{D} \subset \mathcal{B} \text{ is a finite subfamily with } |\mathcal{D}|>1  \right\}$$
\end{defn}

\begin{thm}\label{Kelleyscharbeta}
	For a a compact Hausdorff space $K$ the following are equivalent:
	\begin{enumerate}
		\item $K$ supports a strictly positive measure.
		\item The family $\mathcal{G}$ of all nonempty open subsets of $K$ admits a countable decomposition $\mathcal{G} = \bigcup_{n=1}^\infty \mathcal{G}_n$ such that $wi\tilde{n}(\mathcal{G}_n)>0$ for all $n$.
	\end{enumerate} 
\end{thm}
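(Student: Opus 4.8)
The plan is to obtain Theorem~\ref{Kelleyscharbeta} from the already-available Theorem~\ref{Kelleyschar} by a purely combinatorial comparison of the two indices. Concretely, I would prove that for every family $\mathcal{B}$ of nonempty sets one has
$$ win(\mathcal{B}) > 0 \quad\Longleftrightarrow\quad wi\tilde{n}(\mathcal{B}) > 0. $$
Granting this, a countable decomposition $\mathcal{G} = \bigcup_{n} \mathcal{G}_n$ of the nonempty open sets witnessing $win(\mathcal{G}_n) > 0$ for all $n$ is literally the same decomposition witnessing $wi\tilde{n}(\mathcal{G}_n) > 0$ for all $n$, and conversely. Thus condition (2) of Theorem~\ref{Kelleyscharbeta} is equivalent to condition (2) of Theorem~\ref{Kelleyschar}, which by that theorem is equivalent to (1); this is exactly what has to be shown.

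For one implication I would first record the elementary facts that $l(\mathcal{D}) \geq 1$ for any finite family $\mathcal{D}$ of nonempty sets, and that $l(\mathcal{D}) = 1$ precisely when $|\mathcal{D}| = 1$. Since for $|\mathcal{D}| > 1$ the denominator $|\mathcal{D}| - 1$ is strictly smaller than $|\mathcal{D}|$, the term-by-term inequality $\frac{l(\mathcal{D})}{|\mathcal{D}|} \leq \frac{l(\mathcal{D})}{|\mathcal{D}| - 1}$ holds; and because the infimum defining $win(\mathcal{B})$ ranges over a strictly larger collection of subfamilies than the one defining $wi\tilde{n}(\mathcal{B})$ (it also allows $|\mathcal{D}| = 1$), I obtain at once $win(\mathcal{B}) \leq wi\tilde{n}(\mathcal{B})$, and in particular $win(\mathcal{B}) > 0 \Rightarrow wi\tilde{n}(\mathcal{B}) > 0$.

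For the reverse implication I would use the estimate $\frac{|\mathcal{D}| - 1}{|\mathcal{D}|} \geq \frac{1}{2}$, valid whenever $|\mathcal{D}| \geq 2$. This yields, for every subfamily with $|\mathcal{D}| > 1$,
$$ \frac{l(\mathcal{D})}{|\mathcal{D}|} = \frac{l(\mathcal{D})}{|\mathcal{D}| - 1}\cdot\frac{|\mathcal{D}| - 1}{|\mathcal{D}|} \geq \frac{1}{2}\,\frac{l(\mathcal{D})}{|\mathcal{D}| - 1} \geq \frac{1}{2}\, wi\tilde{n}(\mathcal{B}). $$
Splitting the infimum defining $win(\mathcal{B})$ according to whether $|\mathcal{D}| = 1$ (where the ratio equals $1$) or $|\mathcal{D}| > 1$, I get $win(\mathcal{B}) \geq \min\{1, \tfrac{1}{2}\, wi\tilde{n}(\mathcal{B})\}$, so that $wi\tilde{n}(\mathcal{B}) > 0 \Rightarrow win(\mathcal{B}) > 0$, completing the equivalence.

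I do not anticipate a genuine obstacle: the entire content is the two-sided comparison of the indices. The only points demanding care are the bookkeeping of the degenerate subfamilies (the empty family, and families $\mathcal{B}$ with fewer than two members, for which one of the two infima is taken over an empty set of subfamilies and must be read as $+\infty$, still positive and harmless) and the verification that $l(\mathcal{D}) = 1$ for singletons, which is what pins down the $|\mathcal{D}| = 1$ contribution to $win(\mathcal{B})$ and makes the constant $1$ appear in the lower bound above.
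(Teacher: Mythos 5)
Your proposal is correct, and its skeleton coincides with the paper's: both reduce Theorem~\ref{Kelleyscharbeta} to Theorem~\ref{Kelleyschar} by proving that, for any family $\mathcal{B}$ of nonempty sets, $win(\mathcal{B})>0$ if and only if $wi\tilde{n}(\mathcal{B})>0$, so that the very same countable decomposition witnesses both conditions. The difference lies only in how this equivalence is checked. The paper groups subfamilies by cardinality, setting $\gamma_k(\mathcal{B})=\inf\{l(\mathcal{D}) : \mathcal{D}\subset\mathcal{B},\ |\mathcal{D}|=k\}$, and argues asymptotically: an infimum of positive numbers is positive iff its lower limit is, and $\liminf_k \gamma_k(\mathcal{B})/k$ and $\liminf_k \gamma_k(\mathcal{B})/(k-1)$ are simultaneously positive because $(k-1)/k\to 1$. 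You instead prove the explicit two-sided estimates $win(\mathcal{B})\le wi\tilde{n}(\mathcal{B})$ and $win(\mathcal{B})\ge\min\{1,\tfrac{1}{2}\, wi\tilde{n}(\mathcal{B})\}$, using $(|\mathcal{D}|-1)/|\mathcal{D}|\ge \tfrac{1}{2}$ for $|\mathcal{D}|\ge 2$ and $l(\mathcal{D})=1$ for singletons; this avoids limit arguments and yields a quantitative comparison of the two indices, slightly more than the qualitative equivalence the paper extracts, though both proofs rest on the same observation that the two normalizations differ by a bounded factor. One small correction: your ``elementary fact'' that $l(\mathcal{D})=1$ precisely when $|\mathcal{D}|=1$ is false as a biconditional, since any finite family of pairwise disjoint nonempty sets also has $l(\mathcal{D})=1$; fortunately you only invoke the true implication (singletons have $l(\mathcal{D})=1$), which is all that is needed to pin the singleton contribution to $win(\mathcal{B})$ at $1$, so nothing in your argument breaks.
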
  

\begin{proof}
	It is enough to check that, given a family $\mathcal{B}$ of nonempty sets, $wi\tilde{n}(\mathcal{B})>0$ if and only if $win(\mathcal{B})>0$. For $k=1,2,3,\ldots$, consider
	$$\gamma_k(\mathcal{B}) = \inf \left\{ l(\mathcal{D}) : \mathcal{D} \subset \mathcal{B},\  |\mathcal{D}|=k  \right\}.$$
	Notice that $\gamma_k(\mathcal{B})>0$ for all $k$, and the infimum of a sequence of positive numbers is positive if and only if the lower limit of that sequence is positive. Therefore,
	\begin{eqnarray*}
	win(\mathcal{B}) =
	\inf_{k\geq 1}  \frac{\gamma_k(\mathcal{B})}{k}>0
	& \iff & \liminf_{k\geq 1}\frac{\gamma_k(\mathcal{B})}{k}>0 \\
	& \iff & \liminf_{k\geq 2}\frac{\gamma_k(\mathcal{B})}{k-1}\cdot\frac{k-1}{k}= \liminf_{k\geq 2}\frac{\gamma_k(\mathcal{B})}{k-1}>0\\
	& \iff & wi\tilde{n}(\mathcal{B}) =
	\inf_{k\geq 2}  \frac{\gamma_k(\mathcal{B})}{k-1}>0.
	\end{eqnarray*}
\end{proof}

Given a nonempty open set $G\subset K$, we choose, using Urysohn's lemma, a continuous function $f_G\in C(K)$ such that $\|f_G\|_\infty = 1$ and $f_G|_{K\setminus G} = 0$. Given a family $\mathcal{G}$ of open sets, the corresponding family of functions will be written as
$$\mathcal{F}_{\mathcal{G}} = \{ f_G : G\in \mathcal{G}\}.$$

\begin{lem}\label{keylemma}
	Let $\mathcal{G} = \{G_1,\ldots,G_m\}$ be a finite family of nonempty open subsets of $K$. For every $g\in C(K)$ there exists a $C(K)$-valued martingale $(N_0,N_1,N_2)$ on a probability space $(\Omega,\Sigma,p)$ such that
	\begin{enumerate}
		\item $\Omega_2 = \Omega$, 
		
		and for all $\omega\in \Omega$:
		\item $N_0(\omega) = g$,
		\item either $dN_2(\omega)\in \mathcal{F}_\mathcal{G}$ or $-dN_2(\omega)\in \mathcal{F}_\mathcal{G}$,
		\item\label{keylemmaformula} $\|N_2(\omega)\|_\infty \leq \max(\|g\|_\infty,1) + \frac{l(\mathcal{G})}{m-1}$.
	\end{enumerate}
\end{lem}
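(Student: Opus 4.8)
The plan is to realize the martingale on the finite probability space $\Omega=\{1,\dots,m\}\times\{-1,1\}$ with the uniform measure, using the three partitions $\mathcal{E}_0=\{\Omega\}$, $\mathcal{E}_1=\{F_1,\dots,F_m\}$ with $F_i=\{i\}\times\{-1,1\}$, and $\mathcal{E}_2$ consisting of the singletons. I set $N_0\equiv g$, let $N_1$ take a constant value $h_i\in C(K)$ (to be chosen) on each $F_i$, and put $N_2(i,s)=h_i+s\,f_{G_i}$. With this shape $dN_2(i,s)=s\,f_{G_i}$ lies in $\pm\mathcal{F}_{\mathcal{G}}$, which is (3); each $F_i$ splits into the singletons $\{(i,1)\}$ and $\{(i,-1)\}$, which have equal probability and union $F_i\in\mathcal{E}_1$, hence form a $2$-Walsh--Paley pair, so $\Omega_2=\Omega$, which is (1); and (2) is immediate. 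The sequence is a martingale exactly when both increments have conditional mean zero: for $dN_2$ this is automatic, since the values $\pm f_{G_i}$ on $F_i$ cancel, while for $dN_1$ it reduces to the single scalar-free condition $\frac1m\sum_{i=1}^m h_i=g$.

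It then remains to choose $h_i\in C(K)$ so that $\frac1m\sum_i h_i=g$ and, writing $M=\max(\|g\|_\infty,1)$ and $\varepsilon=l(\mathcal{G})/(m-1)$, the leaves obey $\|h_i\pm f_{G_i}\|_\infty\le M+\varepsilon$, which is (4). Because both signs are attached to the same $h_i$, this two-sided bound is equivalent to the pointwise constraint $|h_i(x)|\le r_i(x):=M+\varepsilon-f_{G_i}(x)$ for all $x\in K$ (here I use $0\le f_G\le 1$, the standard conclusion of Urysohn's lemma). The difficulty is that the averaging requirement $\frac1m\sum_i h_i=g$ and the per-branch requirement $|h_i|\le r_i$ pull against each other: a naive choice such as $h_i=g$ or $h_i=g(1-f_{G_i})$ either violates the leaf bound on one of the two sides or forces a correction of size proportional to $\|g\|_\infty$.

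To reconcile them I would distribute $g$ among the branches proportionally to the available room $r_i$: setting $R=\sum_{k=1}^m r_k$, define $h_i=mg\,r_i/R$. Then $\frac1m\sum_i h_i=g\,(\sum_i r_i)/R=g$ automatically, and $h_i$ is continuous since $R$ is a strictly positive continuous function. The pointwise estimate $|h_i(x)|\le r_i(x)$ reduces to $m|g(x)|\le R(x)=m(M+\varepsilon)-S(x)$, where $S=\sum_k f_{G_k}$. This is precisely where the intersection index enters: at every $x$ at most $l(\mathcal{G})$ of the sets $G_k$ contain $x$, so $S(x)\le l(\mathcal{G})$ and hence $m|g(x)|+S(x)\le mM+l(\mathcal{G})\le m(M+\varepsilon)$. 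The main obstacle is exactly this feasibility inequality: it is the step that requires the bound $l(\mathcal{G})/(m-1)$ (indeed any $\varepsilon\ge l(\mathcal{G})/m$ suffices), and it simultaneously guarantees $R(x)\ge mM-l(\mathcal{G})+m\varepsilon\ge m\varepsilon>0$ via $l(\mathcal{G})\le m$ and $M\ge1$, so the quotient defining $h_i$ is well posed.

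Once $|h_i(x)|\le r_i(x)$ is established, the triangle inequality gives $|h_i(x)\pm f_{G_i}(x)|\le r_i(x)+f_{G_i}(x)=M+\varepsilon$ for every $x$, i.e. $\|N_2(\omega)\|_\infty\le M+\varepsilon=\max(\|g\|_\infty,1)+l(\mathcal{G})/(m-1)$, which is (4). Collecting the verifications of (1)--(3) from the first paragraph with this choice of the $h_i$ then completes the construction.
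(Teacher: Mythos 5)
Your proof is correct, but the key step is genuinely different from the paper's. Both constructions share the same skeleton, which is essentially forced by the statement: a probability space split into $m$ pairs of equal-measure atoms, $N_0\equiv g$, and top-level increments $dN_2=\pm f_{G_i}$ on the $i$-th pair, so that the whole content of the lemma is the choice of the middle level $N_1$, a function $h_i$ on the $i$-th pair subject to the martingale constraint $\frac{1}{m}\sum_i h_i=g$ and the leaf bound $\|h_i\pm f_{G_i}\|_\infty\le M+\varepsilon$, where $M=\max(\|g\|_\infty,1)$ and $\varepsilon=l(\mathcal{G})/(m-1)$. The paper resolves this by truncation: it writes $g=g_0+g_1$ with $g_1$ the truncation of $g$ at levels $\pm 1$, takes $N_1=g+g_1\left(\frac{1}{m-1}\sum_{j\neq i}f_{G_j}-f_{G_i}\right)$ on the $i$-th pair, and bounds the leaves by noting that $g_1(1-f_{G_i})\pm f_{G_i}$ is pointwise a convex combination of $g_1(x)$ and $\pm 1$ (hence of norm at most $1$), while the correction term $\frac{g_1}{m-1}\sum_{j\neq i}f_{G_j}$ has norm at most $\frac{l(\mathcal{G})}{m-1}$. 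You instead allocate $g$ among the branches proportionally to the available room, $h_i=m g r_i/R$ with $r_i=M+\varepsilon-f_{G_i}$ and $R=\sum_k r_k$, and reduce the leaf bound, via the identity $\max(|a+b|,|a-b|)=|a|+|b|$, to the single feasibility inequality $m|g(x)|+\sum_k f_{G_k}(x)\le m(M+\varepsilon)$. Both arguments use the intersection index at exactly one point, namely that at most $l(\mathcal{G})$ of the $f_{G_k}$ are nonzero at any given $x$, but the mechanisms (mean zero by an explicit linear correction plus a convexity estimate, versus mean zero by normalization plus a pointwise feasibility estimate) are distinct; as you observe, your route even yields the marginally sharper constant $l(\mathcal{G})/m$. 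Two minor remarks: your division by $R$ requires checking $R>0$ everywhere, which you do correctly and which has no analogue in the paper's division-free formula; and the paper's proof is carried out on an arbitrary probability space admitting $2m$ pairwise disjoint sets of equal measure, which is the form actually invoked later when the Proposition glues copies of the lemma inside the atoms $E_j$ of a nonatomic space with the normalized measures $p/p(E_j)$ --- your finite $\Omega$ proves the lemma exactly as stated, and your formulas transfer verbatim to that setting, but that (trivial) transfer is needed for the subsequent application.
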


\begin{proof}
	Consider $(E_k^{\pm})_{k=1}^m$ a sequence of $2m$ many pairwise disjoint measurable sets, whose union is $\Omega$ and of equal measure $p(E_k^\pm) = \frac{1}{2m}$. Consider $g_1,g_0 \in C(K)$ defined by
	$$ g_1(x) = \begin{cases} 
	-1 & \text{ if } g(x)\leq -1,\\
	g(x) & \text{ if } -1<g(x)<-1, \\
	1 & \text{ if } g(x)\geq 1,
	 \end{cases}$$
	 $$g_0(x) = g(x) - g_1(x) = \begin{cases} 
	 g(x)+1 & \text{ if } g(x)\leq -1,\\
	 0 & \text{ if } -1<g(x)<-1, \\
	 g(x)-1 & \text{ if } g(x)\geq 1.
	 \end{cases}$$
	Notice that $\|g_0\|_\infty  = (\|g\|_\infty-1)^+$. Let us now define the martingale $N_n$ and associated $L_n = dN_n$, $n=0,1,\ldots$. We declare $N_0 = g = g_0 + g_1$, $L_0=N_0$, 
	$$L_1(\omega) = g_1 \cdot \left(\frac{h_k}{m-1} - f_{G_k}\right), \text{ for } \omega\in E_k^-\cup E_k^+$$
	where $$h = \sum_{j=1}^mf_{G_j}, \ \ h_k = h - f_{G_k}.$$
	From the definition of  $l(\mathcal{G})$, it follows that $\|h_k\|_\infty\leq l(\mathcal{G})$ for all $k$.
	Notice that
	$$\mathbb{E}(L_1) = \frac{g_1}{m}\sum_{k=1}^m \left(\frac{h_k}{m-1} - f_{G_k}\right) = \frac{g_1}{m}\left(\sum_{k=1}^m \frac{\sum_{j\neq k}f_{G_j}}{m-1} - \sum_{k=1}^m f_{G_k}\right) = 0.$$
	So we can define $N_1 = N_0 + L_1$, and finally $L_2(\omega) = \pm f_{G_k}$ when $\omega\in E_k^\pm$ and $N_2 = N_1+L_2$. The nontrivial part that remains to be proven is statement (\ref{keylemmaformula}). For $\omega\in E_k^\pm$, we have that
	\begin{eqnarray*}
		N_2(\omega) = N_0(\omega) + L_1(\omega) + L_2(\omega) = g  + {g_1}\left(\frac{h_k}{m-1} - f_{G_k}\right) \pm f_{G_k}\\
		= g - g_1 f_{G_k} + \frac{g_1 h_k}{m-1} \pm f_{G_k} = g_0 + g_1(1-f_{G_k}) \pm f_{G_k} + \frac{g_1 h_k}{m-1}
	\end{eqnarray*}
We already noticed that $\|h_k\|\leq l(\mathcal{G})$ and it is obvious from the definition of $g_1$ that $\|g_1\|\leq 1$. So the last summand is bounded by $\frac{l(\mathcal{G})}{m-1}$. We also computed that $\|g_0\|_\infty = (\|g\|_\infty-1)^+$, so in order to prove (\ref{keylemmaformula}), it is enough to  show that the central summands satisfy
$$\|g_1(1-f_{G_k}) \pm f_{G_k}\|_\infty \leq 1.$$
But this is obvious, because $0\leq f_{G_k}\leq 1$ and $\|g_1\|_\infty\leq 1$, so the above function takes as value, on every point $x\in K$, a convex combination of $g_1(x)$ and $\pm 1$.
\end{proof}

\begin{prop}
	Let $\mathcal{G}_1,\ldots,\mathcal{G}_q$ be a finite sequence of finite families of nonempty open subsets of $K$. Then there exists a $C(K)$-valued martingale $(M_0,\ldots,M_{2q})$ on a probability space $(\Omega,\Sigma,p)$ such that
	\begin{enumerate}
		\item $\Omega_{2r} = \Omega$ for $r=1,\ldots,q$,
		\item For all $\omega\in \Omega$ and $r=1,\ldots,q$, either $dM_{2r}(\omega)\in \mathcal{F}_{\mathcal{G}_{2r}}$ or $-dM_{2r}(\omega)\in \mathcal{F}_{\mathcal{G}_{2r}}$,
		\item For all $\omega\in\Omega$,
		$$\|M_{2q}(\omega)\|_\infty \leq 1 + \sum_{r=1}^q \frac{l(\mathcal{G}_r)}{|\mathcal{G}_r|-1}.$$
	\end{enumerate}
\end{prop}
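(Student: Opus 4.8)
The plan is to build the martingale in $q$ consecutive blocks of two steps, the $r$-th block being produced by running the construction of Lemma~\ref{keylemma} for the family $\mathcal{G}_r$ \emph{separately over each atom} of the partition reached after $2(r-1)$ steps. Write $m_r=|\mathcal{G}_r|$. Since the probability space in Lemma~\ref{keylemma} may be taken to be any space carrying $2m_r$ disjoint sets of equal measure, I fix once and for all the finite product $\Omega=\prod_{r=1}^q\Lambda_r$, where each $\Lambda_r$ has $2m_r$ points with the uniform measure; a point is a sequence $\omega=(\omega_1,\ldots,\omega_q)$ with $\omega_r=(k_r,\pm)$, recording an index $k_r\in\{1,\ldots,m_r\}$ and a sign. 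The filtration is the obvious one: the atoms of $\mathcal{E}_{2r}$ are obtained by fixing $\omega_1,\ldots,\omega_r$, while $\mathcal{E}_{2r-1}$ refines $\mathcal{E}_{2r-2}$ by fixing in addition only the index part $k_r$. This is precisely the two-level partition produced inside one application of Lemma~\ref{keylemma}, iterated $q$ times, so that the conditional space on any atom of $\mathcal{E}_{2r-2}$ already carries the $2m_r$ equal sub-atoms the lemma needs.

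Starting from $M_0=0$, I define the increments by induction. Assuming $M_{2r-2}$ has been built and is constant, equal to some $g_A\in C(K)$, on each atom $A$ of $\mathcal{E}_{2r-2}$, I run the construction of Lemma~\ref{keylemma} for $\mathcal{G}_r$ with starting vector $g=g_A$ on the conditional space $A$, identifying its sets $E_k^{\pm}$ with the sub-atoms $\{\omega\in A:\omega_r=(k,\pm)\}$, and I declare $dM_{2r-1},dM_{2r}$ to be the resulting increments on $A$; gluing over all atoms $A$ defines the $r$-th block. Because the functions $f_G$ depend only on $\mathcal{G}_r$ and not on $A$, the top increment has the clean form $dM_{2r}(\omega)=\pm f_{G_{k_r}}$ with $G_{k_r}\in\mathcal{G}_r$, which is exactly property (2); and each pair $\bigl(\{\omega_r=(k,+)\}\cap A,\ \{\omega_r=(k,-)\}\cap A\bigr)$ is a $2r$-Walsh-Paley pair, these pairs exhausting $\Omega$, which gives property (1).

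To check that the glued object is a martingale I must verify $\int_E dM_n\,dp=0$ for every $E$ in the preceding algebra; as that algebra is generated by atoms, it is enough to integrate over a single atom. For the odd increment this is the identity $\mathbb{E}(L_1)=0$ proved inside Lemma~\ref{keylemma}, applied on each atom of $\mathcal{E}_{2r-2}$; for the even increment it is the sign symmetry $dM_{2r}=\pm f_{G_{k_r}}$ on each atom of $\mathcal{E}_{2r-1}$, whose two equal-measure halves cancel. One also checks readily that $M_n$ is $\mathcal{A}_n$-simple for the filtration above.

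The norm bound (3) is then a short induction on $r$. Conclusion (\ref{keylemmaformula}) of Lemma~\ref{keylemma} gives, pointwise on each atom $A$ of $\mathcal{E}_{2r-2}$,
$$\|M_{2r}(\omega)\|_\infty\leq\max(\|g_A\|_\infty,1)+\frac{l(\mathcal{G}_r)}{m_r-1}.$$
Assuming inductively that $\|M_{2r-2}\|_\infty\leq B:=1+\sum_{j=1}^{r-1}\frac{l(\mathcal{G}_j)}{m_j-1}$ and noting $B\geq 1$, the maximum is at most $B$, so $\|M_{2r}(\omega)\|_\infty\leq B+\frac{l(\mathcal{G}_r)}{m_r-1}$; taking $r=q$ yields (3). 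The one genuinely delicate point is this fibrewise use of Lemma~\ref{keylemma}: the lemma is stated for a single fixed starting vector, whereas here $g_A$ varies from atom to atom, so the substance of the argument is to confirm that gluing the per-atom constructions preserves both the martingale identity and the equality $\Omega_{2r}=\Omega$. Once the product filtration is arranged as above, both reduce to the per-atom statements recalled here.
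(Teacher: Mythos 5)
Your proposal is correct and follows essentially the same route as the paper: an inductive, blockwise construction that applies Lemma~\ref{keylemma} with family $\mathcal{G}_r$ separately on each atom of the filtration reached after $2(r-1)$ steps and glues the resulting two-step martingales, with the norm bound obtained by the same induction on blocks. The only (harmless) differences are cosmetic: you fix a concrete finite product probability space and start from $M_0=0$, whereas the paper works on an abstract nonatomic space and starts from an arbitrary norm-one function, and you spell out the verification of the martingale identity and of $\Omega_{2r}=\Omega$ that the paper leaves implicit.
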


\begin{proof}
	We construct $M_r$ by induction on $r$ for $r=0,\ldots,2q$ on a given nonatomic probability space. We take $M_0$ to be constant equal to an arbitrary function $g\in C(K)$ with $\|g\|=1$. Assume that $M_r$ has been constructed for $r=0,\ldots,2k$. We consider $\mathcal{E}_{2k} = \{E_1,\ldots,E_m\}$ the atoms of the algebra $\mathcal{A}_{2k}$. We know that $M_{2k}$ is constant on each $E_j$ equal to some $g_j\in C(K)$ and, for all $\omega\in\Omega$,
			$$\|M_{2k}(\omega)\|_\infty \leq 1 + \sum_{r=1}^k \frac{l(\mathcal{G}_r)}{|\mathcal{G}_r|-1}.$$
	For every $j\in\{1,\ldots,m\}$ we can apply Lemma~\ref{keylemma} to the probability space $(E_j,\Sigma|_{E_j},\frac{p}{p(E_j)})$, $g=g_j$, and the family $\mathcal{G} = \mathcal{G}_{k+1}$. This provides a martingale $(N_0^j,N_1^j,N_2^j)$ on $E_j$. Putting all these martingales together, we can define $M_{2k+1}(\omega) = N^j_1(\omega)$ and $M_{2k+2}(\omega) = N^j_2(\omega)$ whenever $\omega\in E_j$. These random variables have all the required properties.
\end{proof}

\begin{cor}\label{corabound}
	If $H\subset C(K)$ is a homogeneous set such that $\mathcal{F}_{\mathcal{G}_1} \cup\cdots\cup\mathcal{F}_{\mathcal{G}_q}\subset H$, then
		$$a_{2q}(H) \leq 1 + \sum_{r=1}^q \frac{l(\mathcal{G}_r)}{|\mathcal{G}_r|-1}.$$ 
\end{cor}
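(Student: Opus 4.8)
The plan is to prove the bound by exhibiting an explicit competitor in the infimum that defines $a_{2q}(H)$, namely the martingale produced by the preceding Proposition. Applying that Proposition to the families $\mathcal{G}_1,\dots,\mathcal{G}_q$, I obtain a $C(K)$-valued martingale $(M_0,\dots,M_{2q})$ on some probability space $(\Omega,\Sigma,p)$ satisfying its conclusions (1)--(3). I will take the witnessing sets to be $D_{2r}=\Omega$ for the even steps $n=2r$, $r=1,\dots,q$ (these are the steps at which the increment lands, up to sign, in the normalized family $\mathcal{F}_{\mathcal{G}_r}$), and then check that $(M_n)$ is an admissible element of $\tilde{\Xi}$ and read the norm bound off conclusion (3).

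First I would verify the two defining requirements of the admissible class at each even step. The inclusion $dM_{2r}(D_{2r})\subset H$ follows from conclusion (2) together with homogeneity of $H$: for each $\omega$ either $dM_{2r}(\omega)\in\mathcal{F}_{\mathcal{G}_r}\subset H$ or $-dM_{2r}(\omega)\in\mathcal{F}_{\mathcal{G}_r}\subset H$, and in the latter case $dM_{2r}(\omega)\in H$ because $H$ is closed under multiplication by $-1$. For the Walsh--Paley integral condition I would invoke conclusion (1), $\Omega_{2r}=\Omega$, so that $D_{2r}\cap\Omega_{2r}=\Omega$; since every value of $dM_{2r}$ has the form $\pm f_G$ with $\|f_G\|_\infty=1$ by the Urysohn normalization fixed before Lemma~\ref{keylemma}, the integrand $\|dM_{2r}\|^2$ is identically $1$, whence $\int_{D_{2r}\cap\Omega_{2r}}\|dM_{2r}\|^2\,dp=p(\Omega)=1$. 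The even steps $n=2,4,\dots,2q$ therefore furnish the distinct integers at which both conditions hold, placing $(M_n)$ in the admissible class.

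Finally I would control $\sup_n(\mathbb{E}\|M_n\|^2)^{1/2}$. The point is that by Lemma~\ref{normsquareincreases}, taken with $E=\Omega\in\mathcal{A}_{n-1}$, the sequence $\mathbb{E}\|M_n\|^2$ is nondecreasing, so its supremum is attained at the terminal term $M_{2q}$; and conclusion (3) bounds $\|M_{2q}(\omega)\|_\infty$ pointwise by $1+\sum_{r=1}^q l(\mathcal{G}_r)/(|\mathcal{G}_r|-1)$, so that $(\mathbb{E}\|M_{2q}\|^2)^{1/2}$ is bounded by the same quantity. As $(M_n)$ is admissible, the infimum defining the index is no larger, giving the stated inequality. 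I expect the only delicate points to be bookkeeping ones: correctly matching each even increment $dM_{2r}$ with its family $\mathcal{G}_r$ (so that the printed subscript in conclusion (2) is read as $\mathcal{G}_r$) and counting the good steps, verifying that these increments have constant norm one so that the Walsh--Paley condition holds on all of $\Omega$ rather than on a proper subset, and reducing the supremum over $n$ to the terminal term through the submartingale monotonicity of $n\mapsto\mathbb{E}\|M_n\|^2$.
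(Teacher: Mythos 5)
Your proposal is correct and is essentially the paper's own proof: both exhibit the martingale of the preceding Proposition as a competitor in the infimum, taking $D_{2r}=\Omega$ at the even steps, using homogeneity of $H$ to absorb the signs from conclusion (2), using $\Omega_{2r}=\Omega$ and $\|dM_{2r}\|_\infty=1$ to get $\int_{D_{2r}\cap\Omega_{2r}}\|dM_{2r}\|^2\,dp=1$, and reading the bound off conclusion (3) (your explicit reduction of $\sup_n\mathbb{E}\|M_n\|^2$ to the terminal term via Lemma~\ref{normsquareincreases} is left implicit in the paper). One caveat you share with the paper itself: the even steps $n=2,4,\dots,2q$ supply only $q$ good indices, so what is literally established is the bound for $a_q(H)$ rather than $a_{2q}(H)$; since $(a_k(H))_k$ is nondecreasing this is the weaker statement, but it is exactly what the paper's proof also yields, and it suffices unchanged for the application in Corollary~\ref{betawiszero}.
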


\begin{proof}
 The martingale in the previous lemma is one of the martingales that appear in the definition of $a_k(H)$ because, for even numbers $n\leq 2k$,
 $$ \int_{dM_n^{-1}(H)\cap \Omega_n}\|dM_n\|_\infty^2 dp =  \int_{\Omega}\|dM_n\|_\infty^2 dp = 1.$$
\end{proof}

\begin{cor}\label{betawiszero}
	Let $\mathcal{G}$ be a family of nonempty open sets with $wi\tilde{n}(\mathcal{G})=0$. Then $a(H)\leq 1$ for every homogenous set $H\subset C(K)$ such that $\mathcal{F}_\mathcal{G}\subset H$.
\end{cor}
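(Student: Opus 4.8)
The plan is to deduce this directly from Corollary~\ref{corabound} together with the definitions of $a(H)$ and $wi\tilde{n}(\mathcal{G})$. Recall that $a(H)=\sup_k a_k(H)$ and that $(a_k(H))_k$ is nondecreasing. Thus, to show $a(H)\leq 1$ it suffices to show $a_k(H)\leq 1$ for every fixed $k$, which in turn I will obtain by showing $a_k(H)\leq 1+\varepsilon$ for every $\varepsilon>0$ and then letting $\varepsilon\to 0$.

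So fix $k\in\mathbb{N}$ and $\varepsilon>0$. Choose $q$ large enough that $2q\geq k$; by monotonicity we then have $a_k(H)\leq a_{2q}(H)$. The key point is that the hypothesis $wi\tilde{n}(\mathcal{G})=0$ means precisely that the quotients $\frac{l(\mathcal{D})}{|\mathcal{D}|-1}$, taken over finite subfamilies $\mathcal{D}\subset\mathcal{G}$ with $|\mathcal{D}|>1$, have infimum zero. Hence I may select a single such finite subfamily $\mathcal{D}\subset\mathcal{G}$ with $\frac{l(\mathcal{D})}{|\mathcal{D}|-1}<\varepsilon/q$, and set $\mathcal{G}_1=\cdots=\mathcal{G}_q=\mathcal{D}$. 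Since $\mathcal{D}\subset\mathcal{G}$ we have $\mathcal{F}_{\mathcal{G}_r}\subset\mathcal{F}_{\mathcal{G}}\subset H$ for each $r$, so the hypothesis of Corollary~\ref{corabound} is met and it yields
$$a_{2q}(H)\leq 1+\sum_{r=1}^q \frac{l(\mathcal{G}_r)}{|\mathcal{G}_r|-1}= 1+ q\cdot\frac{l(\mathcal{D})}{|\mathcal{D}|-1}<1+\varepsilon.$$

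Combining, $a_k(H)\leq a_{2q}(H)<1+\varepsilon$. As $\varepsilon>0$ was arbitrary this gives $a_k(H)\leq 1$, and as $k$ was arbitrary we conclude $a(H)=\sup_k a_k(H)\leq 1$, as required. I do not expect a genuine obstacle here, since the estimate is a direct bookkeeping combination of earlier results; the only point requiring a moment's care is the order of quantifiers, namely that the supremum over $k$ in the definition of $a(H)$ forces us to beat an arbitrarily large index $k$, which is handled by repeating a single good family $\mathcal{D}$ a total of $q$ times so that $2q\geq k$ while the accumulated defect $q\cdot\frac{l(\mathcal{D})}{|\mathcal{D}|-1}$ stays below $\varepsilon$.
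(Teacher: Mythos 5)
Your proposal is correct and takes essentially the same route as the paper: both arguments feed finite subfamilies of $\mathcal{G}$ with small quotient $l(\mathcal{D})/(|\mathcal{D}|-1)$ into Corollary~\ref{corabound} and then invoke monotonicity of the sequence $(a_k(H))_k$. The only cosmetic difference is bookkeeping: the paper chooses a sequence of families $\mathcal{G}_r$ with defects $\varepsilon_r$ summing to $\varepsilon$, while you repeat one well-chosen family $\mathcal{D}$ (with defect below $\varepsilon/q$) $q$ times, which is equally legitimate since nothing in Corollary~\ref{corabound} requires the families $\mathcal{G}_1,\ldots,\mathcal{G}_q$ to be distinct.
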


\begin{proof}
	Fix $\varepsilon>0$ and we will prove that $a(H)<1+\varepsilon$. Write $\varepsilon =\sum_{r=0}^\infty \varepsilon_r$ for some numbers $\varepsilon_r>0$. Since $wi\tilde{n}(\mathcal{G})=0$ for every $r\in\mathbb{N}$ there exists a finite set $\mathcal{G}_r\subset \mathcal{G}$ such that
	$$ \frac{l(\mathcal{G}_r)}{|\mathcal{G}_r|-1} < \varepsilon_r. $$
	By Corollary~\ref{corabound}, we get that for every $q\in\mathbb{N}$,
	$$a_{2q}(H) \leq  1 + \sum_{r=1}^q \frac{l(\mathcal{G}_r)}{|\mathcal{G}_r|-1} < 1 +\sum_{r=1}^q\varepsilon_r < 1+\varepsilon.$$ 
	But $(a_k(H))_{k\geq 1}$ is a nondecreasing sequence, so $a(H) = \sup_k a_k(H) \leq 1.$
\end{proof}

We are now ready to prove the implication $(1)\Rightarrow (2)$ of  Theorem~\ref{maintheorem}. Assume that $C(K)$ has an equivalent URED norm. Fix any $t>1$ and consider the decomposition $X=\bigcup_m X_{m,t}$ provided by Theorem~\ref{UREDa}. Consider also 
$$\mathcal{G}_{m,t} = \{G \text{ nonempty open sets} : f_{G} \in X_{m,t} \}.$$
Notice that $\bigcup_m \mathcal{G}_{m,t}$ is the family of all nonempty open sets. If $K$ did not support a measure, then by Theorem~\ref{Kelleyscharbeta} there must $m$ such that $wi\tilde{n}(\mathcal{G}_{m,t}) = 0$. Since $\mathcal{F}_{\mathcal{G}_{m,t}} \subset X_{m,t}$,  Corollary~\ref{betawiszero} implies that $a(X_{m,t})\leq 1$. This contradicts the fact given by Theorem~\ref{UREDa} that $a(X_{m,t})>t$.


\begin{thebibliography}{999}

\bibitem{DasLin} F. K. Dashiell, J. Lindenstrauss,
Some examples concerning strictly convex norms on C(K) spaces.
Israel J. Math. 16 (1973), 329--342.

\bibitem{DJS} M. M. Day, R. C. James, and S. Swaminathan, Normed linear spaces that are uniformly convex in every direction, Canad. J. Math. 23 (1971), 1051--1059. 

\bibitem{DGZ} R. Deville, G. Godefroy, V. Zizler, Smoothness and renormings in Banach spaces, Pitman Monographs and Surveys in Pure and Applied Mathematics 64, Longman Scientific and Technical, Harlow, 1993.

\bibitem{DKT} S. J. Dilworth, D. Kutzarova and S. L.
Troyanski, On some uniform geometric properties in
function spaces, General topology in Banach spaces,
127--135, Nova Sci. Publ., Huntington, NY, 2001. 
 
\bibitem{GalPri} F. Galvin, K. Prikry, On Kelley's intersection numbers, Proc. Am. Math. Soc. 129 (2000), 315--323.

\bibitem{Haydon} R. G. Haydon, Trees in renorming theory, Proc. London Math. Soc. 78 (1999) 541--584. 

\bibitem{Haydon1} R. G. Haydon, Boolean rings that are Baire spaces. Serdica Math. J. 27 (2001), 91--106.

\bibitem{JW}  I. Juh\'{a}sz and W. Weiss, On thin-tall scattered spaces, Colloq. Math. 40 (1978/79), no. 1, 63--68.


\bibitem{Kutzarova} D. N. Kutzarova, On an equivalent norm in $L_1$ which is uniformly convex in every direction, Constructive Theory of Functions, Sofia 84 (1984), 507--512.

\bibitem{KT} D. Kutzarova, S. Troyanski, On equivalent lattice norms which are uniformly convex or uniformly differentiable in every  direction in Banach lattices with a weak unit, Serdica Math. J.,  9 (1983), 249--262.

\bibitem{LPT} S. Lajara,  A. Pallar\'{e}s, S. Troyanski, Estimations for the moduli of convexity and smoothness of reflexive subspaces of $L_1$, J. Funct. Anal.,  261 (2011), 3211--3225.

\bibitem{Rychtar} J. Rycht\'{a}\v{r}, Pointwise uniformly rotund norms, Proc. Am. Math. Soc. 133 (2005), 2259--2266.

\bibitem{Stromberg} K. Stromberg, Probability for analysts, Chapman
and Hall , New York , 1994.

\bibitem{TodorcevicCC} S. Todorcevic, Chain condition methods in topology, Top. Appl. 101 (2000), 45--82.

\bibitem{T1} S. Troyanski, Equivalent norms that are uniformly convex
and uniformly differentiable in every direction ,
C. R. Acad. Bulg. Sci., 32 (1979), 1461--1464.
(Russian)

\bibitem{T2} S. Troyanski, Construction of equivalent norms for certain
local characteristics with rotundity and smoothness
by means of martingales, Proc.14th Spring
Conference of Mathematics and Math. Education,
Bulg. Acad. Nauk, Sofia 1985, 129--156. (Russian)

. 


\end{thebibliography}
\end{document}